\def\Z{\mathbb{Z}}
\def\({\left(}
\def\){\right)}
\def\la{\lambda}
\let\temp\phi
\let\phi\varphi
\let\varphi\temp
\newtheorem{theorem}{Theorem}
\newtheorem{lemma}[theorem]{Lemma}
\theoremstyle{remark}
\newtheorem*{remark}{Remark}
\numberwithin{equation}{section}
\begin{document}


\title[ ]{Dissections of a ``strange" function}

\date{\today}
\dedicatory{In  memory of Felice Bateman, Paul Bateman, and Heini Halberstam}
\author{Scott Ahlgren}
\address{Department of Mathematics\\
University of Illinois\\
Urbana, IL 61801} 
\email{sahlgren@illinois.edu} 
 \author{Byungchan Kim}
\address{School of Liberal Arts \\ Seoul National University of Science and Technology \\ 172 Gongreung 2 dong, Nowongu, Seoul,139-743, Korea}
\email{bkim4@seoultech.ac.kr}

 
\begin{abstract}
The ``strange" function  of Kontsevich and Zagier  is defined by 
\[F(q):=\sum_{n=0}^\infty(1-q)(1-q^2)\dots(1-q^n).\] 
This series is defined only when $q$ is a root of unity, and provides an example 
of what Zagier has called a ``quantum modular form."
In their recent work on congruences for the Fishburn numbers $\xi(n)$ (whose generating function is $F(1-q)$), 
Andrews and Sellers recorded a speculation about the polynomials which appear in the dissections
of the partial sums of $F(q)$.  We prove that a more general form of their speculation is true.  The congruences  of Andrews-Sellers were generalized by Garvan 
in the case of prime modulus, and by Straub in the case of prime power modulus.  
As a corollary of our theorem, we reprove the known congruences for $\xi(n)$  modulo prime powers.
 \end{abstract}
 
\thanks{The first author was  supported by a grant from the Simons Foundation (\#208525 to Scott Ahlgren).
Byungchan Kim was supported by the Basic Science Research Program through the National Research Foundation of Korea (NRF) funded by the Ministry of Education (NRF 2013R1A1A2061326)}


\maketitle

\section{Introduction}
Define 
\[(q)_n:=(1-q)(1-q^2)\dots(1-q^n)\] 
and let the ``strange" function of Kontsevich and Zagier \cite{Zagier:2001} be defined by 
\begin{equation}
F(q):=\sum_{n=0}^\infty (q)_n.
\end{equation}
This is an example of what Zagier \cite{Zagier:2010} calls a ``quantum modular form."
The definition makes sense as a formal power series only when $q$ is a root of unity, in which case the sum terminates.
The series does not converge on any open set, but as shown by Zagier, the value of each of its derivatives is well defined at every root of unity.
In particular, if $\zeta$ is an $N$th root of unity, then we have \cite{Zagier:2001}
\begin{equation}\label{Fexpansion}
F(\zeta e^{-z})=\sum_{n=0}^\infty \frac{b_n(\zeta)z^n}{n!},
\end{equation}
where the coefficients are given by expressions of the form
\begin{equation}\label{bndefine}
b_n(\zeta)=\sum_{\substack{m\leq 6N \\ (m, 6)=1}} a(m, N)\zeta^{\frac{m^2-1}{24}}
\end{equation}
with certain integers $a(m, N)$.

We set
\[F(q; N):=\sum_{n=0}^N (q)_n.\]
If $\zeta_n$ is an $n$th root of unity and $N\geq n-1$ then we have
\begin{equation}\label{fnpartial}
F(\zeta_n)= F(\zeta_n; n-1)=F(\zeta_n; N).
\end{equation}

The Fishburn numbers $\xi(n)$ are defined by the generating function
\[\sum_{n=0}^\infty \xi(n)q^n=F(1-q)=1+q+2 q^2+5 q^3+15 q^4+53 q^5 +\dots.\]
This does make sense as a formal power series since $(1-q)_n=O(q^n)$.

We will consider the $t$-dissections of the polynomials $F(q; N)$ for positive integers $t$.  Given $N$ and $t$, we define polynomials
$A_{t} (N, i , q)\in \Z[q]$ via the formula
\begin{equation}\label{atdef}
F(q; N)  =\sum_{i=0}^{t-1} q^i A_{t} (N, i , q^t ).
\end{equation}
The pentagonal numbers are those integers of the form $(m^2-1)/24$ where $m$ is coprime to $6$.
We define $S(t)\subseteq\{0, 1, \dots, t-1\}$ as the set of  reductions of the collection of pentagonal numbers modulo $t$,
and we define 
\[\lambda(N, t):=\lfloor \tfrac{N+1}{t} \rfloor.\]

We will prove the following, which answers a question of Andrews and Sellers \cite{Andrews:2014} in the special case when $t$ is a prime number. 
\begin{theorem} \label{Atdiv}
Suppose that $t$ and $N$ are positive integers, and that 
 $i \not\in S(t)$.  Then  
 \[(q)_{\lambda(N, t)}\big | A_{t} (N, i, q).  \]
\end{theorem}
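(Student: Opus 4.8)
The plan is to translate the divisibility into a vanishing statement about the polynomials $A_t(N,i,q)$ at roots of unity, and then to read that vanishing off Zagier's expansion \eqref{Fexpansion}--\eqref{bndefine}, using crucially that the exponents occurring there are exactly pentagonal numbers. Write $\lambda=\lambda(N,t)$ (and note the claim is trivial if $\lambda=0$, so assume $\lambda\ge 1$). Factor
\[(q)_\lambda=\prod_{k=1}^\lambda(1-q^k)=(-1)^\lambda\prod_{d=1}^\lambda\Phi_d(q)^{\lfloor \lambda/d\rfloor},\]
where $\Phi_d$ is the $d$th cyclotomic polynomial. Since each $\Phi_d$ is separable with roots the primitive $d$th roots of unity, and since $A_t(N,i,q)\in\Z[q]$, it suffices to prove: for every $d$ with $1\le d\le\lambda$ and every primitive $d$th root of unity $\zeta_d$, the polynomial $A_t(N,i,q)$ vanishes at $\zeta_d$ to order at least $\lfloor \lambda/d\rfloor$. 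The descent from a factorization in $\C[q]$ to one in $\Z[q]$ is then automatic by Gauss's lemma, since $(q)_\lambda$ has constant term $1$ and is therefore primitive.

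Fix such a $d$ and $\zeta_d$, choose a $td$th root of unity $\zeta_{td}$ with $\zeta_{td}^{\,t}=\zeta_d$, and let $\omega$ be a primitive $t$th root of unity. Work in $\overline\Q[[z]]$ with $\eta:=\zeta_{td}\,e^{-z/t}$, so $\eta^t=\zeta_d e^{-z}$. Substituting $q=\omega^j\eta$ for $0\le j\le t-1$ into \eqref{atdef} and inverting the resulting discrete Fourier transform yields
\[\zeta_{td}^{\,i}\,e^{-iz/t}\,A_t\big(N,i,\zeta_d e^{-z}\big)=\frac1t\sum_{j=0}^{t-1}\omega^{-ij}\,F(\omega^j\eta;N).\]
Because $\zeta_{td}^{\,i}e^{-iz/t}$ is a unit in $\overline\Q[[z]]$ and $\zeta_d e^{-z}-\zeta_d$ has $z$-order $1$, the order of vanishing of $A_t(N,i,q)$ at $q=\zeta_d$ equals the $z$-order of the right-hand side. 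So it remains to show the right-hand side lies in $z^{\lfloor \lambda/d\rfloor}\,\overline\Q[[z]]$.

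Two ingredients finish this. First, a mild refinement of \eqref{fnpartial}: for a root of unity $\zeta$ of order $n$, the product $(\zeta e^{-z})_k=\prod_{\ell=1}^k(1-\zeta^\ell e^{-\ell z})$ vanishes at $z=0$ to order $\lfloor k/n\rfloor$ (the vanishing factors being those with $n\mid\ell$), so $F(\zeta e^{-z};N)\equiv F(\zeta e^{-z})\pmod{z^{\lfloor (N+1)/n\rfloor}}$; applied with $\zeta=\omega^j\zeta_{td}$, whose order divides $td\le N+1$, and using $\lfloor(N+1)/(td)\rfloor=\lfloor \lambda/d\rfloor$, this gives $F(\omega^j\eta;N)\equiv F(\omega^j\eta)\pmod{z^{\lfloor \lambda/d\rfloor}}$ for every $j$. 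Second, applying \eqref{Fexpansion}--\eqref{bndefine} with each $\omega^j\zeta_{td}$ regarded as a $td$th root of unity (so the integers $a(m,td)$ are uniform in $j$) gives $F(\omega^j\eta)=\sum_{n\ge0}\tfrac{(z/t)^n}{n!}\sum_{(m,6)=1}a(m,td)\,(\omega^j\zeta_{td})^{(m^2-1)/24}$, hence
\[\frac1t\sum_{j=0}^{t-1}\omega^{-ij}\,F(\omega^j\eta)=\sum_{n\ge0}\frac{(z/t)^n}{n!}\sum_{\substack{m\leq 6td,\ (m,6)=1\\ (m^2-1)/24\,\equiv\, i\pmod t}}a(m,td)\,\zeta_{td}^{(m^2-1)/24},\]
since $\frac1t\sum_j\omega^{j((m^2-1)/24-i)}$ equals $1$ or $0$ according as $(m^2-1)/24\equiv i\pmod t$ or not. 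When $i\notin S(t)$ there is no integer $m$ coprime to $6$ with $(m^2-1)/24\equiv i\pmod t$, so this entire expression vanishes identically; combined with the first ingredient, the right-hand side of the displayed Fourier identity lies in $z^{\lfloor \lambda/d\rfloor}\,\overline\Q[[z]]$, which is exactly what was needed.

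The heart of the matter is that $F$ and all of its formal ``derivatives'' at a root of unity are supported on pentagonal exponents, so any dissection component of $F(q;N)$ attached to a non-pentagonal residue is forced to vanish at low-order roots of unity. The only points requiring care are numerical: arranging that the $z$-adic order produced above is exactly $\lfloor \lambda/d\rfloor$ --- which works because $\lfloor\lfloor(N+1)/t\rfloor/d\rfloor=\lfloor(N+1)/(td)\rfloor$ and every root of unity that appears has order dividing $td\le N+1$ --- and keeping the coefficients $a(m,td)$ uniform in $j$ by carrying out everything at level $td$ rather than at the individual orders of the $\omega^j\zeta_{td}$. Beyond this bookkeeping I do not anticipate a serious obstacle.
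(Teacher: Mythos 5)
Your argument is correct, and although it rests on the same two pillars as the paper's proof --- the cyclotomic factorization of $(q)_{\la(N,t)}$ in \eqref{cyclo}, and the fact that orthogonality in the dissection kills the $i$th component because \eqref{bndefine} is supported on pentagonal exponents --- your technical route is genuinely different. The paper converts ``vanishing to order $\la(N,tk)$ at $\zeta_k$'' into the vanishing of the derivatives $A_t^{(\nu)}(N,i,\zeta_k)$, and reaches that by combining the level-raising identity \eqref{basecase} (reducing to vanishing at $q=1$ of $A_{tk}$), the Andrews--Sellers differentiation recursion \eqref{diffrec} with its integer array $C_{\nu,i,j}(t)$, the stability statement of Lemma~\ref{stablelemma}, and an induction on $\nu$. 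You instead encode the entire order of vanishing at $\zeta_d$ as the $z$-adic valuation of $A_t\(N,i,\zeta_d e^{-z}\)$ in $\bar{\Q}[[z]]$, perform the discrete Fourier inversion of \eqref{atdef} once at the formal point $q=\omega^j\zeta_{td}e^{-z/t}$ (working at level $td$ from the start, which plays the role of \eqref{basecase}), and control the dependence on $N$ by the elementary observation that $(\zeta e^{-z})_k$ has $z$-order $\tfloor{k/n}$, so the tail $\sum_{k>N}(\zeta e^{-z})_k$ contributes nothing below $z^{\tfloor{(N+1)/n}}$. This disposes of the $C_{\nu,i,j}(t)$ recursion, the induction on $\nu$, and the separate stability lemma in one stroke, at the cost of working in a power series ring over $\bar{\Q}$ rather than with honest polynomial derivatives and finite linear algebra over $\Z$; the paper's version, in exchange, makes the stabilization of $A_t^{(\nu)}(N,i,\zeta)$ in $N$ explicit, which is of some independent interest. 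The delicate points in your write-up all check out: the floor identity $\tfloor{\la/d}=\tfloor{(N+1)/(td)}$; the uniformity of the integers $a(m,td)$ over all (not necessarily primitive) $td$th roots of unity, which the paper's own orthogonality step uses in exactly the same way; and the descent from $\C[q]$ to $\Z[q]$ via separability and pairwise coprimality of the $\Phi_d$ together with Gauss's lemma applied to the primitive polynomial $(q)_\la$.
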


\begin{remark}

Let $\Phi_k$ denote the $k$th cyclotomic polynomial.  We find that
\begin{equation}\label{cyclo}
(q)_{\lambda(N, t)}= \pm \prod_{k=1}^{\la (N,t)} \Phi_k (q)^{\lambda(N, tk)}.
\end{equation}
In particular, if $i\not\in S(t)$, then $A_t(t, i, q)$ is divisible by a high power of $\Phi_1(q)=q-1$.
This fact (when $t$ is prime) is crucial for  the proof of the congruences for Fishburn numbers which have been recently studied by 
Andrews-Sellers \cite{Andrews:2014},  Garvan \cite{Garvan:2014}, and Straub \cite{Straub:2014}. 
In the last section we illustrate how to derive congruences for the Fishburn numbers using Theorem~\ref{Atdiv}.
\end{remark}

\section{Proofs}

We require a lemma which shows that the quantities of interest are stable as $N$ grows.

\begin{lemma}\label{stablelemma}
Suppose that $\zeta$ is an $n$th root of unity, that $t\geq 1$ and that  $\nu\geq 0$.
\begin{enumerate}
\item If $N\geq (\nu+1) n-1$ then
\begin{equation}\label{Fstable}
\left( q \tfrac{d}{dq} \right)^{v}F(q)\big|_{q=\zeta}=\left( q \tfrac{d}{dq} \right)^{v}F(q; (\nu+1) n-1)\big|_{q=\zeta}=\left( q \tfrac{d}{dq} \right)^{v}F(q; N)\big|_{q=\zeta}.\end{equation}
\item If $N\geq (\nu+1) n t-1$ then for all $i$ we have
\[A^{(\nu )}_t(N, i, \zeta)=A^{(\nu )}_t\((\nu+1) n t-1, i, \zeta\).\]
\end{enumerate}
\end{lemma}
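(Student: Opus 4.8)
The plan is to deduce both parts from one elementary observation: $(q)_m$ has a high-order zero at every root of unity. Precisely, if $\zeta^n=1$ and $m\ge kn$, then the pairwise coprime factors $1-q^{n},\,1-q^{2n},\dots,1-q^{kn}$ each occur in $(q)_m=\prod_{j=1}^{m}(1-q^{j})$ and each vanishes simply at $q=\zeta$, so $(q)_m$ vanishes to order at least $k=\lfloor m/n\rfloor$ at $q=\zeta$. In particular, any differential operator that is a combination of the operators $q^{j}\tfrac{d^{j}}{dq^{j}}$ with $0\le j\le\nu$ — for instance $(q\tfrac{d}{dq})^{\nu}$ — sends $(q)_m$ to a polynomial vanishing at $q=\zeta$ as soon as $\lfloor m/n\rfloor\ge\nu+1$, i.e. as soon as $m\ge(\nu+1)n$.

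Part (1) follows at once. Differentiating $F(q;N)=\sum_{m=0}^{N}(q)_m$ termwise, the observation shows that only the terms with $m\le(\nu+1)n-1$ contribute to $(q\tfrac{d}{dq})^{\nu}F(q;N)\big|_{q=\zeta}$; hence this value stabilises once $N\ge(\nu+1)n-1$, which is the second equality. For the first equality, note that with $q=\zeta e^{-z}$ one has $q\tfrac{d}{dq}=-\tfrac{d}{dz}$, so by \eqref{Fexpansion} the coefficient of $z^{\nu}$ in $F(\zeta e^{-z})=\sum_{m}(\zeta e^{-z})_m$ equals, up to the factor $(-1)^{\nu}/\nu!$, the finite sum $\sum_{m=0}^{(\nu+1)n-1}(q\tfrac{d}{dq})^{\nu}(q)_m\big|_{q=\zeta}$; thus $(q\tfrac{d}{dq})^{\nu}F(q)\big|_{q=\zeta}$ agrees with the corresponding value of $F(q;(\nu+1)n-1)$.

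For part (2) I would combine (1) with the standard root-of-unity filter. If $\zeta_t$ is a primitive $t$th root of unity, then substituting $q\mapsto\zeta_t^{j}q$ in \eqref{atdef} and averaging over $j$ gives
\[q^{i}A_t(N,i,q^{t})=\frac1t\sum_{j=0}^{t-1}\zeta_t^{-ij}\,F(\zeta_t^{j}q;N).\]
Since $q\tfrac{d}{dq}$ commutes with each dilation $q\mapsto\zeta_t^{j}q$, applying $(q\tfrac{d}{dq})^{\nu}$ and evaluating at a fixed $w$ with $w^{t}=\zeta$ yields
\[\Big(q\tfrac{d}{dq}\Big)^{\nu}\big(q^{i}A_t(N,i,q^{t})\big)\Big|_{q=w}=\frac1t\sum_{j=0}^{t-1}\zeta_t^{-ij}\,\Big(q\tfrac{d}{dq}\Big)^{\nu}F(q;N)\Big|_{q=\zeta_t^{j}w}.\]
Each $\zeta_t^{j}w$ is an $nt$th root of unity, so part (1) (with $nt$ in place of $n$) lets us replace $F(q;N)$ by $F(q;(\nu+1)nt-1)$ on the right-hand side precisely when $N\ge(\nu+1)nt-1$, which is the hypothesis. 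Finally, writing $Q=q^{t}$ and using that conjugating $q\tfrac{d}{dq}$ by multiplication by $q^{i}$ adds $i$ while $q\tfrac{d}{dq}$ acts on functions of $Q$ as $t\,Q\tfrac{d}{dQ}$, the left-hand side expands as
\[\Big(q\tfrac{d}{dq}\Big)^{\nu}\big(q^{i}A_t(N,i,q^{t})\big)\Big|_{q=w}=w^{i}\sum_{k=0}^{\nu}\binom{\nu}{k}i^{\nu-k}t^{k}\Big(Q\tfrac{d}{dQ}\Big)^{k}A_t(N,i,Q)\Big|_{Q=\zeta},\]
and likewise with $N$ replaced by $(\nu+1)nt-1$. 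Cancelling $w^{i}\ne0$, equating the two, and inducting on $\nu$ — the terms with $k<\nu$ being handled by weaker instances of part (2) and the base case $\nu=0$ being \eqref{fnpartial} fed through the same filter — isolates the top term and gives $A_t^{(\nu)}(N,i,\zeta)=A_t^{(\nu)}((\nu+1)nt-1,i,\zeta)$. (If $A_t^{(\nu)}$ denotes the ordinary $\nu$th $q$-derivative rather than $(q\tfrac{d}{dq})^{\nu}$, the same argument with $\tfrac{d}{dq}$ in place of $q\tfrac{d}{dq}$ applies, only the chain-rule constants changing.)

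The only substantive point is the vanishing-order observation of the first paragraph; everything after it is bookkeeping with the dissection filter and the chain rule. The step most likely to require care is translating derivatives of $q^{i}A_t(N,i,q^{t})$ at $w$ back into derivatives of $A_t(N,i,\,\cdot\,)$ at $\zeta$: one must track the constants introduced by $q\mapsto q^{t}$ and by the prefactor $q^{i}$, and organise the induction on $\nu$ so that the lower-order cases are already available. One should also check that the conclusion is independent of the chosen $t$th root $w$ of $\zeta$.
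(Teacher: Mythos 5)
Your proof is correct and follows essentially the same route as the paper: part (1) from the order of vanishing of $(q)_m$ at roots of unity, and part (2) from the root-of-unity filter applied to \eqref{atdef} together with an induction on $\nu$ in which the nonzero leading coefficient $t^{\nu}$ isolates the top derivative (your explicit binomial expansion is exactly the differentiation formula the paper imports from Andrews--Sellers, Lemma 2.4). One cosmetic slip: the factors $1-q^{n},\dots,1-q^{kn}$ are not pairwise coprime (e.g.\ $1-q^{n}$ divides $1-q^{2n}$), but your argument only needs that they are $k$ distinct factors of $(q)_m$, each vanishing simply at $q=\zeta$, so nothing is affected.
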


\begin{proof}
The first claim follows from the definition of $F(q; N)$.
For the second we require \cite[Lemma 2.4]{Andrews:2014} (which remains true for composite values of $t$).
In particular we have
\begin{equation}\label{diffrec}
\left( q \tfrac{d}{dq} \right)^\nu F(q; N) =\sum_{j=0}^{\nu}  \sum_{i=0}^{t-1} C_{\nu,i,j} (t) q^{i+jt} A_{t}^{(j)} (N, i, q^{t}),
\end{equation}
where $C_{\nu,i,j} (t)$ is the array of integers defined recursively as follows:
\begin{enumerate}
\item  $C_{0,0,0} (t) =1$,
\item $C_{\nu,i,0} (t) = i^\nu$ and $C_{\nu,i,j} (t) =0$ for $j \ge \nu+1$, 
\item $C_{\nu+1, i,j} (t) = (i + j t ) C_{\nu,i,j} (t) + t C_{\nu, i, j-1} (t)$ for $1\leq j\leq \nu$.
\end{enumerate}

We now extract those terms in 
 \eqref{diffrec} whose exponents are congruent to $i$ modulo $t$. By orthogonality, we find that
\begin{equation}\label{importanteq}
\sum_{j=0}^{\nu}  C_{\nu,i,j} (t) q^{i+jt} A_{t}^{(j)} (N, i , q^{t})=\frac{1}{t} \sum_{j=0}^{t-1} \zeta_{t}^{-ji}   \(  \left( q \tfrac{d}{dq} \right)^\nu F( q, N) \)\Big|_{q=\zeta_{t}^j q}.
\end{equation}
The second claim follows from induction on $\nu$ after substituting a $nt$-th root of unity for $q$,   using \eqref{Fstable} and the fact that the coefficients $C_{\nu, i, j}(t)$ which appear are non-zero.
\end{proof}
\begin{proof}[Proof of Theorem~\ref{Atdiv}]

Suppose that $t$ and $k$ are positive integers, that $i\not\in S(t)$, that    $\nu\geq 0$, and that $\zeta_k$ is a   $k$th root of unity. 
Using \eqref{cyclo},  the theorem will follow once we show that 
\begin{equation}\label{needtoshow}
A_t^{(\nu)}(N, i, \zeta_k)=0 \ \ \  \text{for}\ \ \  N\geq (\nu+1) t k -1.
\end{equation}
From   \eqref{atdef} we find  that 
\begin{equation}\label{basecase}
A_t (N, i, q ) = \sum_{j=0}^{k-1} q^{j} A_{tk} (N, i+jt, q^k).
\end{equation}

Note that if  $i \not\in S(t)$, then    $i+jt \not\in S(tk)$.
It is therefore enough to show (after replacing $A_{tk}$ by $A_t$) that for all $t$ and $\nu$, for $i\not\in S(t)$ and  for $N\geq (\nu+1) t  -1$ we have
\begin{equation}\label{needtoshow2}
A_{t}^{(\nu)}( N, i , 1) = 0.
\end{equation}

We prove \eqref{needtoshow2} by induction on $\nu$.
For the base case $\nu=0$, suppose that $N\geq t-1$.  
 Substituting $q=\zeta_{t}$  in \eqref{importanteq}, and using 
\eqref{Fstable}  and \eqref{Fexpansion} gives 
\[ A_{t}(N, i, 1)= \frac{1}{t}\sum_{j=1}^{t} \zeta_{t}^{-ji} F(\zeta_{t}^{j}; N)=\frac{1}{t}\sum_{j=1}^{t} \zeta_{t}^{-ji} F(\zeta_{t}^{j})=
\frac{1}{t}\sum_{j=1}^{t} \zeta_{t}^{-ji} b_0(\zeta_{t}^{j}).\]
We find that $A_{t}(N, i, 1)=0$ using \eqref{bndefine} and orthogonality, since $i\not\in S(t)$.

Suppose now that \eqref{needtoshow2} has been established for $j\leq \nu-1$ and that $N\geq (\nu+1) t  -1$.
Using \eqref{importanteq}  and \eqref{Fstable}  as in the base case along with the induction hypothesis, we find that
\[C_{\nu, i, \nu}(t)A_{t}^{(\nu)}(N, i, 1)=
\frac{1}{t} \sum_{j=1}^{t} \zeta_{t}^{-ji}   \(  \left( q \tfrac{d}{dq} \right)^\nu F( q) \)\Big|_{q=\zeta_{t}^j}
=\frac{1}{t}\sum_{j=1}^{t} \zeta_{t}^{-ji} b_\nu(\zeta_{t}^j).
\]
The result follows from \eqref{bndefine} and orthogonality (note that $C_{\nu, i, \nu}(t)$ is positive).
\end{proof}

\section{Application to congruences for Fishburn numbers}
As a corollary one can derive congruences for the Fishburn numbers modulo $p^r$ for any prime $p\geq 5$  (for background on these numbers, we refer the reader to \cite{Andrews:2014}, \cite{Garvan:2014} and \cite{Straub:2014}). For $r\geq 2$ these congruences and their generalizations to other types of Fishburn numbers
have recently been obtained by Straub \cite{Straub:2014}, who generalized the work of Andrews-Sellers \cite{Andrews:2014} and Garvan 
 \cite{Garvan:2014} in the case $r=1$.  We sketch a proof using Theorem~\ref{Atdiv} in the simplest case of the usual Fishburn numbers
$\xi(n)$ defined in the introduction.
Define $T(p^r)\subseteq\{0, 1, 2, \dots, p^r-1\}$ as the set of those residues which are strictly larger than any element 
of $S(p^r)$.  Using divisibility properties of binomial coefficients along with Theorem~\ref{Atdiv} we show that 
\begin{equation}\label{fishburncong}
j\in T(p^r)\implies \xi(p^r n+j)\equiv 0\pmod {p^r}\ \ \ \ \text{for all $n$}.
\end{equation}
\begin{remark}  It appears that this describes the complete set of such congruences  when $r\geq 2$.  When $r=1$, 
Garvan \cite{Garvan:2014} has proved a slightly stronger result for some primes $p$ (to the authors' knowledge,  $p=23$ is the only  prime
for which Garvan's result is known to be stronger for the usual Fishburn numbers).
For $r\geq 2$, \eqref{fishburncong} states that 
\[\xi(23^r n-j)\equiv 0\pmod {23^r}, \ \ \ 1\leq j\leq 5.\]
Garvan's work implies that the same is true when $r=1$.
\end{remark}
 
 To prove \eqref{fishburncong}, 
suppose that $n$ is a positive integer and that  $N\geq n p^r-1$.
By Theorem \ref{Atdiv} in the case when $t=p^r$,  there exists $P\in \Z[q]$ such that 
\begin{equation}\label{firststep}
F(1-q, N) = \sum_{ i \in S(p^r)} (1-q)^i A_{p^r} (N, i, (1-q)^{p^r}) + \(1-(1-q)^{p^r}\)^n\cdot P(q).
 \end{equation}

It follows that 
  \[ F(1-q, N) \equiv \sum_{ i \in S(p^r)} (1-q)^i A_{p^r} (N, i, (1-q)^{p^r}) +O \left(q^{pn-(p-1)(r-1)} \right)\pmod {p^r}.\]
The sum is an integral linear combination of terms of the form
\[(1-q)^{i+k p^r}\ \ \ \text{with} \ \ \ i\in S(p^r),\]
 so we must prove that if $m$ is a natural number whose residue modulo $p^r$ lies in $T(p^r)$, 
 then
 \begin{equation}\label{bi}
\binom{i + kp^r}{m} \equiv 0 \pmod{p^r}.
\end{equation}

Suppose that $n\in S(p)$ and that $r\geq 2$. These facts are straightforward to check:
\begin{enumerate}
\item If $n\not\equiv -1/24\pmod p$ then $n+pk\in S(p^r)$ for $0\leq k<p^{r-1}$.
\item If $n\equiv -1/24\pmod p$ then $n+p^r-p\not\in S(p^r)$.
\end{enumerate}
Letting  $n_0$ be the largest element of $S(p)$ which is not congruent to $-1/24\pmod p$, it follows that 
the largest element of $S(p^r)$ is $n_0+p^r-p$.
Since $m\in T(p^r)$, we have $m>n_0+p^r-p$.  Therefore  the $p$-adic expansion of $m$ has the form
\[m=m_0+(p-1)p+(p-1)p^2+\dots+(p-1)p^{r-1}+O(p^r),\]
where  $m_0$  is strictly larger than the the first digit in the $p$-adic expansion of $i$.
The congruence  \eqref{bi}  then follows by Kummer's theorem, and \eqref{fishburncong} follows from \eqref{firststep}
after letting $n\rightarrow\infty$.

\bibliographystyle{amsplain}
\bibliography{fishburn_bib.bib}

\end{document}